\newtheorem{theorem}{Theorem}
\newtheorem{lemma}{Lemma}
\newtheorem{example}{Example}
\newtheorem{corollary}{Corollary}
\newtheorem*{daitheorem}{Dai's Theorem~\cite{Dai:JFI14}}
\newcommand{\setA}{\mathscr{A}}
\newcommand{\vvv}{{|\!|\!|}}
\journal{Linear Algebra and its Applications}
\begin{document}

\begin{frontmatter}

\title{The Berger-Wang formula\\
for the Markovian joint spectral radius\tnoteref{rfbr}}

\tnotetext[rfbr]{Supported by the Russian Foundation for Basic Research,
Project No. 13-01-13105.}

\author{Victor Kozyakin}

\address{Institute for Information Transmission
Problems\\ Russian Academy of Sciences\\ Bolshoj Karetny lane 19, Moscow
127994 GSP-4, Russia}

\ead{kozyakin@iitp.ru}
\ead[url]{http://www.iitp.ru/en/users/46.htm}

\begin{abstract}
The Berger-Wang formula establishes equality between the joint and
generalized spectral radii of a set of matrices. For matrix products whose
multipliers are applied not arbitrarily but in accordance with some
Markovian law, there are also known analogs of the joint and generalized
spectral radii. However, the known proofs of the Berger-Wang formula hardly
can be directly applied in the case of Markovian products of matrices since
they essentially rely on the arbitrariness of appearance of different
matrices in the related matrix products. Nevertheless, as has been shown by
X.~Dai~\cite{Dai:JFI14} the Berger-Wang formula is valid for the case of
Markovian analogs of the joint and the generalized spectral radii too,
although the proof in this case heavily exploits the more involved
techniques of multiplicative ergodic theory. In the paper we propose a
matrix theory construction allowing to deduce the Markovian analog of the
Berger-Wang formula from the classical Berger-Wang formula.
\end{abstract}

\begin{keyword}
Infinite matrix products\sep Joint spectral radius\sep Generalized spectral
radius\sep Berger-Wang formula\sep Topological Markov chains

\PACS 02.10.Ud \sep 02.10.Yn

\MSC[2010] 15A18\sep 15A60\sep 60J10
\end{keyword}

\end{frontmatter}

\section{Introduction}\label{S-intro}
Let $\mathbb{K}=\mathbb{R},\mathbb{C}$ be the field of real or complex
numbers, and $\setA=\{A_{1},A_{2},\ldots,A_{N}\}$ be a finite set of
$(d\times d)$-matrices with the elements from $\mathbb{K}$. Given a
sub-multiplicative norm\footnote{A norm $\|\cdot\|$ on a space of linear
operators is called sub-multiplicative if $\|AB\|\le\|A\|\cdot\|B\|$ for any
operators $A$ and $B$.} $\|\cdot\|$ on $\mathbb{K}^{d\times d}$, the limit
\begin{equation}\label{E-JSRad}
\rho({\setA}):=
\limsup_{n\to\infty}\rho_{n}({\setA})\qquad \left(~ =
\lim_{n\to\infty}\rho_{n}({\setA}) =
\inf_{n\ge1}\rho_{n}({\setA})\right),
\end{equation}
where
\[
\rho_{n}({\setA}):=
\sup\left\{\|A_{i_{n}}\cdots A_{i_{1}}\|^{1/n}:~i_{j}\in \{1,2,\ldots,N\}\right\},
\]
is called the \emph{joint spectral radius} of the set of matrices $\setA$
\cite{RotaStr:IM60}. This limit always exists and does not depend on the norm
$\|\cdot\|$. If $\setA$ is a singleton set then (\ref{E-JSRad}) turns into
the known Gelfand formula for the spectral radius of a linear operator. By
this reason sometimes (\ref{E-JSRad}) is called the generalized Gelfand
formula \cite{ShihWP:LAA97}.

The \emph{generalized spectral radius} of the set of matrices $\setA$ is the
quantity defined by a similar to \eqref{E-JSRad} formula in which instead of
the norm is taken the spectral radius $\rho(\cdot)$ of the corresponding
matrices \cite{DaubLag:LAA92,DaubLag:LAA01}:
\begin{equation}\label{E-GSRad}
\hat{\rho}({\setA}):=
\limsup_{n\to\infty}\hat{\rho}_{n}({\setA})\qquad \left(~ = \sup_{n\ge1}\hat{\rho}_{n}({\setA})\right),
\end{equation}
where
\[
\hat{\rho}_{n}({\setA}):=\sup\left\{\rho(A_{i_{n}}\cdots A_{i_{1}})^{1/n}:~i_{j}\in \{1,2,\ldots,N\}\right\}.
\]

As has been noted by M.~Berger and Y.~Wang~\cite{BerWang:LAA92} the
quantities $\rho({\setA})$ and $\hat{\rho}({\setA})$ for bounded sets of
matrices $\setA$ in fact coincide with each other:
\begin{equation}\label{E-bergWang}
\hat{\rho}({\setA})=\rho({\setA}).
\end{equation}
This fundamental formula has numerous applications in the theory of
joint/generalized spectral radius. In particular, it implies the continuous
dependence of the joint/general\-ized spectral radius on the set of matrices
$\setA$. Another important consequence of the \emph{Berger-Wang formula}
\eqref{E-bergWang} is the fact that the quantities $\hat{\rho}_{n}({\setA})$
and $\rho_{n}({\setA})$, for any $n$, form the lower and upper bounds
respectively for the joint/generalized spectral radius of the set of matrices
$\setA$:
\begin{equation}\label{E-GSR-JSR}
\hat{\rho}_{n}({\setA})\le
\hat{\rho}({\setA})=\rho({\setA})\le
\rho_{n}({\setA}),
\end{equation}
which may serve as the basis for estimating the accuracy of computation of
the joint/ge\-neral\-ized spectral radius.


The characteristic feature of the definitions \eqref{E-JSRad} and
\eqref{E-GSRad} is that the matrix products $A_{i_{n}}\cdots A_{i_{1}}$ in
them correspond to all the possible sequences of indices
$(i_{1},\ldots,i_{n})$. Much more complicated is the situation when the
matrix products $A_{i_{n}}\cdots A_{i_{1}}$ in formulae \eqref{E-JSRad} and
\eqref{E-GSRad} are subjected to some additional restrictions, for example,
some combinations of matrices in them are forbidden. Let us describe in more
details a situation of the kind.

Given an $(N\times N)$-matrix $\varOmega=(\omega_{ij})$ with the elements
from the binary set $\{0,1\}$ then the finite sequence $(i_{1},\ldots,i_{n})$
taking the values in $\{1,2,\ldots,N\}$ will be called
\emph{$\varOmega$-admissible} if $\omega_{i_{j+1}i_{j}}=1$ for all $1\le j
\le n-1$ and there exists $i_{*}\in\{1,2,\ldots,N\}$ such that
$\omega_{i_{*}i_{n}}=1$. Denote by $W_{N,\varOmega}$ the set of all
$\varOmega$-admissible sequences $(i_{1},\ldots,i_{n})$. The matrix products
$A_{i_{n}}\cdots A_{i_{1}}$ corresponding to the $\varOmega$-admissible
sequences $(i_{1},\ldots,i_{n})$ will be called \emph{Markovian} since the
products of matrices of the kind arise naturally in the theory of matrix
cocycles over the topological Markov chains, see, e.g.,
\cite{KatokHas:e,Kitchens98}.

Now, define analogs of formulae \eqref{E-JSRad} and \eqref{E-GSRad} for the
$\varOmega$-admissible products of matrices. The limit
\begin{equation}\label{E-JSRadM}
\rho({\setA},\varOmega):=
\limsup_{n\to\infty}\rho_{n}({\setA},\varOmega),
\end{equation}
where
\[
\rho_{n}({\setA},\varOmega):=\sup\left\{\|A_{i_{n}}\cdots A_{i_{1}}\|^{1/n}:~
(i_{1},\ldots,i_{n})\in W_{N,\varOmega}\right\},
\]
will be called the \emph{Markovian joint spectral radius} of the set of
matrices $\setA$ defined by the \emph{matrix of admissible transitions
$\varOmega$}. If, for some $n$, the set of all $\varOmega$-admissible
sequences $(i_{1},\ldots,i_{n})$ is empty then put
$\rho_{n}({\setA},\varOmega)=0$. In this case for each $k\ge n$ the sets of
all $\varOmega$-admissible sequences $(i_{1},\ldots,i_{k})$ will be also
empty, and hence $\rho({\setA},\varOmega)=0$. The question whether there
exist arbitrarily long $\varOmega$-admissible sequences can be resolved in a
finite number of steps. In particular, the set $W_{N,\varOmega}$ has
arbitrarily long sequences if each column of the matrix $\varOmega$ contains
at least one nonzero element.

The limit \eqref{E-JSRadM} always exists and does not depend on the norm
$\|\cdot\|$. To justify this, let us note that the quantity
$\rho_{n}^{n}({\setA},\varOmega)$ is sub-multiplicative in $n$. Then, like in
the case of formula \eqref{E-JSRad}, by the Fekete Lemma~\cite{Fekete:MZ23}
(see also \cite[Ch.~3, Sect.~1]{PolyaSezgoI}) there exist
$\lim_{n\to\infty}\rho_{n}({\setA},\varOmega)$ and
$\inf_{n\ge1}\rho_{n}({\setA},\varOmega)$, and both of them are equal to the
limit \eqref{E-JSRadM}:
\[
\rho({\setA},\varOmega):=
\limsup_{n\to\infty}\rho_{n}({\setA},\varOmega)=\lim_{n\to\infty}\rho_{n}({\setA},\varOmega)=
\inf_{n\ge1}\rho_{n}({\setA},\varOmega).
\]

The quantity
\begin{equation}\label{E-GSRadM}
\hat{\rho}({\setA},\varOmega):=
\limsup_{n\to\infty}\hat{\rho}_{n}({\setA},\varOmega),
\end{equation}
where
\[
\hat{\rho}_{n}({\setA},\varOmega):=\sup\left\{\rho(A_{i_{n}}\cdots A_{i_{1}})^{1/n}:~
(i_{1},\ldots,i_{n})\in W_{N,\varOmega}\right\},
\]
will be called the \emph{Markovian generalized spectral radius} of the set of
matrices $\setA$ defined by the matrix of admissible transitions $\varOmega$.
Here again we put $\hat{\rho}_{n}({\setA},\varOmega)=0$ if the set of
$\varOmega$-admissible sequences of indices $(i_{1},\ldots,i_{n})$ is empty.
Like in the case of formula \eqref{E-GSRad}, the limit \eqref{E-GSRadM}
coincides with $\sup_{n\ge1}\hat{\rho}_{n}({\setA},\varOmega)$.

For the Markovian products of matrices there are valid the inequalities
\begin{equation}\label{E-MGSR-MJSR}
\hat{\rho}_{n}({\setA},\varOmega)\le
\hat{\rho}({\setA},\varOmega)\le\rho({\setA},\varOmega)\le
\rho_{n}({\setA},\varOmega),
\end{equation}
similar to \eqref{E-GSR-JSR}. However the question whether there is valid the
equality
\begin{equation}\label{E-bergWangMark}
\hat{\rho}({\setA},\varOmega)=\rho({\setA},\varOmega),
\end{equation}
similar to the Berger-Wang equality~\eqref{E-bergWang}, becomes more
complicated. The reason is that the known proofs
\cite{BerWang:LAA92,Els:LAA95,ShihWP:LAA97,Bochi:LAA03,Dai:JMAA11} of the
classical Berger-Wang formula~\eqref{E-bergWang} essentially use the fact
that different matrices in the related matrix products can be multiplied in
an arbitrary order. Impossibility to multiply matrices in an arbitrary order,
in the Markovian case, requires to develop a different approach. The arising
difficulties have been overcome by X.~Dai in~\cite{Dai:JFI14} by using the
techniques of the multiplicative ergodic theory. To formulate the related
assertion we need some auxiliary definitions.

An $\varOmega$-admissible finite sequence $(i_{1},\ldots,i_{n})$ will be
referred to as \emph{periodically extendable} if $\omega_{i_{1}i_{n}}=1$. In
general, not every $\varOmega$-admissible finite sequence can be periodically
extended. However, if there are arbitrarily long $\varOmega$-admissible
sequences then there exist also arbitrarily long $\varOmega$-admissible
periodically extendable sequences. The set of all $\varOmega$-admissible
periodically extendable sequences will be denoted by
$W^{(\text{per})}_{N,\varOmega}$.

Define the quantity
\[
\hat{\rho}^{(\text{per})}_{n}({\setA},\varOmega):=\sup\left\{\rho(A_{i_{n}}\cdots A_{i_{1}})^{1/n}:~
(i_{1},\ldots,i_{n})\in W^{(\text{per})}_{N,\varOmega}\right\},
\]
and set\footnote{Like in the definitions of the Markovian joint and
generalized spectral radii we put
$\hat{\rho}^{(\text{per})}_{n}({\setA},\varOmega)=0$ if the set of all the
periodically extendable sequences of length $n$ is empty.}
\begin{equation}\label{E-GSRadMp}
\hat{\rho}^{(\text{per})}({\setA},\varOmega):=
\limsup_{n\to\infty}\hat{\rho}^{(\text{per})}_{n}({\setA},\varOmega).
\end{equation}

\begin{daitheorem}
$\hat{\rho}^{(\text{per})}({\setA},\varOmega)=\rho({\setA},\varOmega)$.
\end{daitheorem}

Since $W^{(\text{per})}_{N,\varOmega}\subseteq W_{N,\varOmega}$ then
$\hat{\rho}^{(\text{per})}_{n}({\setA},\varOmega)\le
\hat{\rho}_{n}({\setA},\varOmega)$ for each $n\ge 1$, and therefore
$\hat{\rho}^{(\text{per})}({\setA},\varOmega)\le
\hat{\rho}({\setA},\varOmega)$. This last inequality together with
\eqref{E-MGSR-MJSR} by Dai's Theorem then implies the Markovian
analog~\eqref{E-bergWangMark} of the Berger-Wang formula~\eqref{E-bergWang}.

The goal of the paper is to propose a matrix theory approach, called for
brevity in the next section the ``$\varOmega$-lift of the set of matrices
$\setA$'', which will allow to reduce consideration of the Markovian products
of matrices to consideration of arbitrary (all possible) products of some
auxiliary matrices. Thereby we will be able deduce the Markovian analog of
the Berger-Wang formula from the classical Berger-Wang formula.

\section{$\boldsymbol{\varOmega}$-lift of the set of matrices $\setA$}\label{S-Bernprod}

Recall that $\setA=\{A_{1},A_{2},\ldots,A_{N}\}$ is a finite set of $(d\times
d)$-matrices with the elements from the field
$\mathbb{K}=\mathbb{R},\mathbb{C}$ of real or complex numbers, and
$\varOmega=(\omega_{ij})$ is an $(N\times N)$-matrix with the elements $0$ or
$1$ which defines admissibility of consecutive co-multipliers in the products
of matrices from $\setA$.

For each $i=1,2,\ldots,N$ define the $(N\times N)$-matrix
\begin{equation}\label{E-Omegai} \varOmega_{i}=
\boldsymbol{\omega}_{i}^{T}\boldsymbol{\delta}_{i},
\end{equation}
where the row-vectors $\boldsymbol{\omega}_{i}$ and $\boldsymbol{\delta}_{i}$
are of the form
\begin{equation}\label{E-defei}
\boldsymbol{\omega}_{i}=\{\omega_{1i},\omega_{2i},\ldots,\omega_{Ni}\},\quad
\boldsymbol{\delta}_{i}=\{\delta_{1i},\delta_{2i},\ldots,\delta_{Ni}\},
\end{equation}
with $\delta_{ij}$ being the Kronecker symbol, and the upper index ``$T$''
denotes transposition of a vector. Then all the nonzero elements of the
matrix $\varOmega_{i}$, if any, belong to the $i$-th column of the matrix,
and what is more, these elements coincide with the corresponding elements of
the matrix $\varOmega$.

Now, define the set $\setA_{\varOmega}\subset\mathbb{K}^{Nd\times Nd}$ of
block $(N\times N)$-matrices $A^{(i)}:=\varOmega_{i}\otimes A_{i}$ whose
elements are $(d\times d)$-matrices, i.e.,
\[
\setA_{\varOmega}:=\left\{\varOmega_{i}\otimes A_{i}: i=1,2,\ldots,N\right\},
\]
where $\otimes$ stands for the Kronecker products of matrices~\cite{HJ:e}.
The set of matrices $\setA_{\varOmega}$ will be called the
\emph{$\varOmega$-lift of the set of matrices $\setA$}.

\begin{example}\label{Ex1}
Let $N=4$ and
\[
\varOmega=\left(
\begin{array}{cccc}
1&0&0&1\\
0&0&1&1\\
1&1&0&1\\
0&1&1&0\\
\end{array}\right).
\]
Then
\begin{align*} \varOmega_{1}&=\left(
\begin{array}{cccc}
1&0&0&0\\
0&0&0&0\\
1&0&0&0\\
0&0&0&0\\
\end{array}\right),\quad
\varOmega_{2}=\left(
\begin{array}{cccc}
0&0&0&0\\
0&0&0&0\\
0&1&0&0\\
0&1&0&0\\
\end{array}\right),\\
\varOmega_{3}&=\left(
\begin{array}{cccc}
0&0&0&0\\
0&0&1&0\\
0&0&0&0\\
0&0&1&0\\
\end{array}\right),\quad
\varOmega_{4}=\left(
\begin{array}{cccc}
0&0&0&1\\
0&0&0&1\\
0&0&0&1\\
0&0&0&0\\
\end{array}\right)
\end{align*}
and
\begin{align*} A^{(1)}&=\left(
\begin{array}{cccc}
A_{1}&0&0&0\\
0&0&0&0\\
A_{1}&0&0&0\\
0&0&0&0\\
\end{array}\right),\quad
A^{(2)}=\left(
\begin{array}{cccc}
0&0&0&0\\
0&0&0&0\\
0&A_{2}&0&0\\
0&A_{2}&0&0\\
\end{array}\right),\\
A^{(3)}&=\left(
\begin{array}{cccc}
0&0&0&0\\
0&0&A_{3}&0\\
0&0&0&0\\
0&0&A_{3}&0\\
\end{array}\right),\quad
A^{(4)}=\left(
\begin{array}{cccc}
0&0&0&A_{4}\\
0&0&0&A_{4}\\
0&0&0&A_{4}\\
0&0&0&0\\
\end{array}\right).
\end{align*}
\end{example}

Given a sub-multiplicative norm $\|\cdot\|$ on the space of $(d\times
d)$-matrices $\mathbb{K}^{d\times d}$, define the norm $\vvv\cdot\vvv$ on
$\mathbb{K}^{Nd\times Nd}$ by setting, for a block $(N\times N)$-matrix
$M=(m_{ij})$ with the elements $m_{ij}\in\mathbb{K}^{d\times d}$,
\[
\vvv M\vvv:=\max_{1\le i\le N}\sum_{j=1}^{N}\|m_{ij}\|.
\]
The norm $\vvv\cdot\vvv$ is also sub-multiplicative which is seen from the
following inequalities for block matrices $B=(b_{ij})$ and $C=(c_{ij})$:
\begin{align*}
\vvv BC\vvv&= \max_{1\le i\le N}\sum_{j=1}^{N}\left\|\sum_{k=1}^{N}b_{ik}c_{kj}\right\|\le
\max_{1\le i\le N}\sum_{j=1}^{N}\sum_{k=1}^{N}\|b_{ik}\|\|c_{kj}\|\\
&=\max_{1\le i\le N}\sum_{k=1}^{N}\sum_{j=1}^{N}\|b_{ik}\|\|c_{kj}\|
=\max_{1\le i\le N}\sum_{k=1}^{N}\left(\|b_{ik}\|\sum_{j=1}^{N}\|c_{kj}\|\right)\\
&\le\max_{1\le i\le N}\sum_{k=1}^{N}\left(\|b_{ik}\|\cdot\vvv C\vvv\right)
=\left(\max_{1\le i\le N}\sum_{k=1}^{N}\|b_{ik}\|\right)\cdot\vvv C\vvv
=\vvv B\vvv\cdot\vvv C\vvv.
\end{align*}

\begin{theorem}\label{T-main} The quantities
\begin{align*}
\rho_{n}(\setA_{\varOmega})&:=\sup\left\{\vvv A^{(i_{n})}\cdots A^{(i_{1})}\vvv^{1/n}:~A^{(i_{k})}\in\setA_{\varOmega}\right\},\\
\hat{\rho}_{n}(\setA_{\varOmega})&:=\sup\left\{\rho(A^{(i_{n})}\cdots A^{(i_{1})})^{1/n}:~A^{(i_{k})}\in\setA_{\varOmega}\right\},\\
\rho_{n}({\setA},\varOmega)&:=\sup\left\{\|A_{i_{n}}\cdots A_{i_{1}}\|^{1/n}:~
(i_{1},\ldots,i_{n})\in W_{N,\varOmega}\right\},\\
\hat{\rho}^{(\text{per})}_{n}({\setA},\varOmega)&:=\sup\left\{\rho(A_{i_{n}}\cdots A_{i_{1}})^{1/n}:~
(i_{1},\ldots,i_{n})\in W^{(\text{per})}_{N,\varOmega}\right\},
\end{align*}
for each $n\ge1$ satisfy the equalities
\[
\rho_{n}(\setA_{\varOmega})=\rho_{n}({\setA},\varOmega),\qquad
\hat{\rho}_{n}(\setA_{\varOmega})=\hat{\rho}^{(\text{per})}_{n}({\setA},\varOmega).
\]
\end{theorem}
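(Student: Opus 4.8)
The key is to understand exactly what the matrix product $A^{(i_n)}\cdots A^{(i_1)}$ looks like. Each $A^{(i)}=\varOmega_i\otimes A_i$ is a block matrix that is almost entirely zero: because $\varOmega_i=\boldsymbol{\omega}_i^T\boldsymbol{\delta}_i$ has its only nonzero entries in column $i$ (namely $(\varOmega_i)_{ki}=\omega_{ki}$), the matrix $A^{(i)}$ has its only nonzero blocks in block-column $i$, and block $(k,i)$ equals $\omega_{ki}A_i$. First I would compute a product of two factors. The Kronecker-product mixed-product rule gives $A^{(i_2)}A^{(i_1)}=(\varOmega_{i_2}\varOmega_{i_1})\otimes(A_{i_2}A_{i_1})$, and since $\varOmega_{i_2}\varOmega_{i_1}=\boldsymbol{\omega}_{i_2}^T(\boldsymbol{\delta}_{i_2}\boldsymbol{\omega}_{i_1}^T)\boldsymbol{\delta}_{i_1}$ with the scalar $\boldsymbol{\delta}_{i_2}\boldsymbol{\omega}_{i_1}^T=\omega_{i_2 i_1}$, this scalar factor is exactly the admissibility indicator for the transition $i_1\to i_2$.

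Iterating, I expect by a straightforward induction that
\[
A^{(i_n)}\cdots A^{(i_1)}=\left(\prod_{j=1}^{n-1}\omega_{i_{j+1}i_j}\right)\bigl(\boldsymbol{\omega}_{i_n}^T\boldsymbol{\delta}_{i_1}\bigr)\otimes\bigl(A_{i_n}\cdots A_{i_1}\bigr),
\]
so the whole product vanishes unless $\omega_{i_{j+1}i_j}=1$ for every $1\le j\le n-1$, i.e.\ unless $(i_1,\ldots,i_n)$ satisfies the chain condition. When it is nonzero, the scalar block pattern $\boldsymbol{\omega}_{i_n}^T\boldsymbol{\delta}_{i_1}$ places the single surviving block $A_{i_n}\cdots A_{i_1}$ in positions $(k,i_1)$ for exactly those $k$ with $\omega_{k i_n}=1$. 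The existence of such a $k$ (equivalently $\boldsymbol{\omega}_{i_n}\ne 0$) is precisely the remaining requirement for $(i_1,\ldots,i_n)\in W_{N,\varOmega}$. Hence the product is a nonzero matrix, all of whose nonzero blocks equal $A_{i_n}\cdots A_{i_1}$, exactly when $(i_1,\ldots,i_n)$ is $\varOmega$-admissible, and it is the zero matrix otherwise.

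With this structural identity in hand the two equalities drop out. For the norm part, the definition $\vvv M\vvv=\max_i\sum_j\|m_{ij}\|$ applied to a block matrix having at most one nonzero block in each row, all equal to $A_{i_n}\cdots A_{i_1}$, yields $\vvv A^{(i_n)}\cdots A^{(i_1)}\vvv=\|A_{i_n}\cdots A_{i_1}\|$ for admissible index strings and $0$ otherwise; taking the supremum over all index strings $(i_1,\ldots,i_n)$ on the left and noting that only admissible ones contribute gives $\rho_n(\setA_\varOmega)=\rho_n(\setA,\varOmega)$. For the spectral-radius part I would observe that the nonzero blocks of $A^{(i_n)}\cdots A^{(i_1)}$ sit at rows $k$ with $\omega_{ki_n}=1$ and all in column $i_1$, so the only diagonal block that can be nonzero is the $(i_1,i_1)$ block, which is nonzero precisely when $\omega_{i_1 i_n}=1$; this is the periodic-extendability condition. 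The spectrum of such a block-structured matrix is determined by its diagonal-block structure, and a short computation shows $\rho(A^{(i_n)}\cdots A^{(i_1)})=\rho(A_{i_n}\cdots A_{i_1})$ when $\omega_{i_1 i_n}=1$ and the product is otherwise nilpotent with zero spectral radius. Taking suprema then gives $\hat{\rho}_n(\setA_\varOmega)=\hat{\rho}^{(\text{per})}_n(\setA,\varOmega)$.

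The main obstacle I anticipate is the spectral-radius computation: I must argue carefully that placing the surviving block $A_{i_n}\cdots A_{i_1}$ on the block diagonal (at position $(i_1,i_1)$) reproduces its spectrum and that the off-diagonal placements contribute only nilpotent directions. Concretely, I would reduce the eigenvalue problem $A^{(i_n)}\cdots A^{(i_1)}x=\lambda x$ (with $x$ partitioned into $N$ blocks $x_1,\ldots,x_N\in\mathbb K^d$) to the single block equation in the $i_1$-component, using that every nonzero output block is a fixed multiple of $A_{i_n}\cdots A_{i_1}x_{i_1}$; for $\lambda\ne 0$ this forces the relevant relation on $x_{i_1}$ and shows the nonzero eigenvalues coincide with those of $A_{i_n}\cdots A_{i_1}$ exactly in the periodically extendable case. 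Everything else is bookkeeping with the block pattern $\boldsymbol{\omega}_{i_n}^T\boldsymbol{\delta}_{i_1}$, which is routine once the product identity above is established.
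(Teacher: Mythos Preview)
Your proposal is correct and follows essentially the same route as the paper: the paper isolates the identity $\varOmega_{i_n}\cdots\varOmega_{i_1}=(\omega_{i_n i_{n-1}}\cdots\omega_{i_2 i_1})\,\boldsymbol{\omega}_{i_n}^{T}\boldsymbol{\delta}_{i_1}$ as a preliminary lemma, then applies the Kronecker mixed-product rule to obtain exactly your displayed formula for $A^{(i_n)}\cdots A^{(i_1)}$, and draws the two conclusions from the resulting single-block-column structure in the same way you do. The only cosmetic difference is that for the spectral-radius part the paper simply notes that a block matrix whose nonzero blocks lie in a single block column has nonzero spectral radius only if its diagonal block is nonzero, and then reads off $\rho(A^{(i_n)}\cdots A^{(i_1)})=\rho(A_{i_n}\cdots A_{i_1})$ directly, whereas you propose to verify this via the eigenvalue system; both amount to the same observation.
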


This theorem together with the Berger-Wang formula~\eqref{E-bergWang} implies
the claim of Dai's Theorem:
$\hat{\rho}^{(\text{per})}({\setA},\varOmega)=\rho({\setA},\varOmega)$. As
was mentioned by X.~Dai in a private communication to the author, from
Theorem~\ref{T-main} it follows also the Lipschitz continuity of the
Markovian joint spectral radius with respect to $\setA$. Specifically, there
is valid the following Markovian analog of Wirth's Theorem
from~\cite{Wirth:LAA02}.
\begin{corollary}\label{Cor4-main}
The map $\setA\mapsto\rho({\setA},\varOmega)$ is locally Lipschitz continuous
on the set of all $\setA$ for which the set of matrices $\setA_{\varOmega}$
is irreducible\footnote{A set of matrices is called irreducible if all the
matrices from this set do not have common invariant subspaces except the
trivial zero space and the whole space.}.
\end{corollary}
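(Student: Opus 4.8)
The plan is to transport Wirth's classical result~\cite{Wirth:LAA02} from the ordinary joint spectral radius of the lifted family $\setA_\varOmega$ to the Markovian joint spectral radius of $\setA$, using Theorem~\ref{T-main} as the bridge. First I would pass to the limit in Theorem~\ref{T-main}: since $\rho_n(\setA_\varOmega)=\rho_n(\setA,\varOmega)$ for every $n\ge1$ and each side tends to the corresponding radius (equivalently, equals its infimum over $n$), one gets
\[
\rho(\setA_\varOmega)=\rho(\setA,\varOmega),
\]
where the left-hand side is now the \emph{ordinary} (unconstrained) joint spectral radius of the finite family $\setA_\varOmega$ of $(Nd\times Nd)$-matrices. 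It therefore suffices to prove that $\setA\mapsto\rho(\setA_\varOmega)$ is locally Lipschitz continuous at every $\setA$ for which $\setA_\varOmega$ is irreducible.

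Next I would invoke Wirth's Theorem, which states that the ordinary joint spectral radius is locally Lipschitz continuous on the open set of irreducible families. Applied at the irreducible family $\setA_\varOmega$, it yields a neighborhood $\mathcal{U}$ of $\setA_\varOmega$ in $(\mathbb{K}^{Nd\times Nd})^{N}$ and a constant $L$ with $|\rho(\mathcal{F})-\rho(\mathcal{G})|\le L\,\mathrm{dist}(\mathcal{F},\mathcal{G})$ for all $\mathcal{F},\mathcal{G}\in\mathcal{U}$; since in finite dimensions local Lipschitz continuity does not depend on the choice of norm, distances on the lifted space may be measured with $\vvv\cdot\vvv$. It then remains to pull this estimate back along the lift map $\Phi:\setA\mapsto\setA_\varOmega$.

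Then I would verify that $\Phi$ is Lipschitz. As $A^{(i)}=\varOmega_i\otimes A_i$ is linear in $A_i$, we have $A^{(i)}-B^{(i)}=\varOmega_i\otimes(A_i-B_i)$, and the block-column structure of $\varOmega_i$ gives $\vvv\varOmega_i\otimes(A_i-B_i)\vvv=\bigl(\max_{1\le k\le N}\omega_{ki}\bigr)\|A_i-B_i\|\le\|A_i-B_i\|$. Hence $\mathrm{dist}(\setA_\varOmega,\setB_\varOmega)=\max_i\vvv A^{(i)}-B^{(i)}\vvv\le\max_i\|A_i-B_i\|=\mathrm{dist}(\setA,\setB)$, so $\Phi$ is $1$-Lipschitz and, in particular, continuous. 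Consequently $\Phi^{-1}(\mathcal{U})$ is a neighborhood of $\setA$, and for every $\setB$ in it,
\[
|\rho(\setA,\varOmega)-\rho(\setB,\varOmega)|=|\rho(\setA_\varOmega)-\rho(\setB_\varOmega)|\le L\,\mathrm{dist}(\setA_\varOmega,\setB_\varOmega)\le L\,\mathrm{dist}(\setA,\setB),
\]
which is precisely local Lipschitz continuity of $\setA\mapsto\rho(\setA,\varOmega)$ at $\setA$.

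The genuine analytic content lives in the two imported ingredients — Theorem~\ref{T-main}, which identifies the Markovian radius with an ordinary joint spectral radius, and Wirth's Theorem, which provides the Lipschitz estimate for the latter — so the corollary itself is a gluing argument. The point requiring the most care, and the main obstacle, is that irreducibility must be assumed for $\setA_\varOmega$ and not for $\setA$: the lifted matrices are block-sparse by construction, so $\setA_\varOmega$ can fail to be irreducible even when $\setA$ is (and vice versa), and Wirth's Theorem may be applied only to a family that is itself irreducible. One must also use that irreducibility is an open condition, so that the neighborhood $\mathcal{U}$ indeed exists; its continuity-preimage under $\Phi$ then supplies the neighborhood of $\setA$ on which the Lipschitz bound holds.
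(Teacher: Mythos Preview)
Your proposal is correct and follows essentially the same route as the paper: reduce to the ordinary joint spectral radius via Theorem~\ref{T-main}, invoke Wirth's local Lipschitz result on irreducible families, and observe that the lift $\setA\mapsto\setA_{\varOmega}$ is Lipschitz. You have simply supplied more detail than the paper does (the explicit $1$-Lipschitz computation for $\Phi$ and the remark that irreducibility is open), but the structure of the argument is identical.
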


To prove Corollary~\ref{Cor4-main} it suffices to note that the classical
joint spectral radius is locally Lipschitz continuous on the variety of all
the irreducible matrix sets~\cite{Wirth:LAA02,Koz:LAA10}, and the map
$\setA\mapsto\setA_{\varOmega}$ is also Lipschitz continuous. This, by
Theorem~\ref{T-main}, implies the claim of Corollary~\ref{Cor4-main}.

Let us remark that irreducibility of the set of matrices $\setA_{\varOmega}$
depends not only on irreducibility of $\setA$ but also on the structure of
the matrix of admissible transitions $\varOmega$. In general, neither
irreducibility of $\setA_{\varOmega}$ follows from irreducibility of $\setA$
nor irreducibility of $\setA_{\varOmega}$ implies irreducibility of $\setA$.

\section{Proof of Theorem~\ref{T-main}}

To prove Theorem~\ref{T-main} we need the following
\begin{lemma}\label{L-prodT}
Given matrices $\varOmega_{i_{k}}$, $k=1,2,\ldots,n$, then
\begin{enumerate}
\item[(i)]it is valid the representation:
\begin{equation}\label{E-Omegaprod}
\varOmega_{i_{n}}\varOmega_{i_{n-1}}\cdots \varOmega_{i_{1}}=
(\omega_{i_{n}i_{n-1}}\cdots \omega_{i_{2}i_{1}})\cdot
\boldsymbol{\omega}_{i_{n}}^{T}\boldsymbol{\delta}_{i_{1}},
\end{equation}
where the vectors $\boldsymbol{\omega}_{i}$ and $\boldsymbol{\delta}_{i}$
are of the form~\eqref{E-defei};

\item[(ii)]$\varOmega_{i_{n}}\varOmega_{i_{n-1}}\cdots
    \varOmega_{i_{1}}\neq0$ if and only if the sequence
    $(i_{1},\ldots,i_{n})$ is $\varOmega$-admissible, i.e.,
\begin{equation}\label{E-allowprod}
\omega_{i_{k+1}i_{k}}=1,\quad k=1,2,\ldots,n-1,
\end{equation}
    and there exists $i_{*}$ such that $\omega_{i_{*}i_{n}}=1$. In this
    case only the $(i_{*},i_{1})$-th elements of the matrix
    $\varOmega_{i_{n}}\varOmega_{i_{n-1}}\cdots \varOmega_{i_{1}}$
    satisfying $\omega_{i_{*}i_{n}}=1$ are other than zero;

\item[(iii)]if condition~\eqref{E-allowprod} holds then the matrix
    $\varOmega_{i_{n}}\varOmega_{i_{n-1}}\cdots \varOmega_{i_{1}}$ has a
    diagonal nonzero element if and only if the sequence
   $(i_{1},\ldots,i_{n})$ is  $\varOmega$-admissible and periodically
   extendable, i.e., $\omega_{i_{1}i_{n}}=1$. In this case the nonzero
   diagonal element of the matrix
   $\varOmega_{i_{n}}\varOmega_{i_{n-1}}\cdots \varOmega_{i_{1}}$ belongs
   to the $i_{1}$-th row and column.
\end{enumerate}
\end{lemma}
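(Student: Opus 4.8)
The plan is to establish part~(i) by induction on $n$ and then to read off parts~(ii) and~(iii) directly from the resulting rank-one expression, without any further matrix multiplication. The single computational fact underlying everything is the scalar contraction
\[
\boldsymbol{\delta}_{i}\boldsymbol{\omega}_{j}^{T}=\omega_{ij},
\]
obtained by pairing the $i$-th coordinate row vector $\boldsymbol{\delta}_{i}$ with the $j$-th column of $\varOmega$ written as the column vector $\boldsymbol{\omega}_{j}^{T}$. Since each $\varOmega_{i}=\boldsymbol{\omega}_{i}^{T}\boldsymbol{\delta}_{i}$ is a rank-one outer product, multiplying two of them collapses the adjacent inner factor to this scalar, so the whole product telescopes down to a single outer product with a scalar prefactor.

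For the base case $n=1$ the claimed identity~\eqref{E-Omegaprod} reads $\varOmega_{i_{1}}=\boldsymbol{\omega}_{i_{1}}^{T}\boldsymbol{\delta}_{i_{1}}$, which is exactly the definition~\eqref{E-Omegai}. For the inductive step I would assume
\[
\varOmega_{i_{n-1}}\cdots\varOmega_{i_{1}}=(\omega_{i_{n-1}i_{n-2}}\cdots\omega_{i_{2}i_{1}})\,\boldsymbol{\omega}_{i_{n-1}}^{T}\boldsymbol{\delta}_{i_{1}}
\]
and left-multiply by $\varOmega_{i_{n}}=\boldsymbol{\omega}_{i_{n}}^{T}\boldsymbol{\delta}_{i_{n}}$. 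Regrouping the factors, the adjacent pair $\boldsymbol{\delta}_{i_{n}}\boldsymbol{\omega}_{i_{n-1}}^{T}$ contracts to the scalar $\omega_{i_{n}i_{n-1}}$ by the identity above, leaving $(\omega_{i_{n}i_{n-1}}\cdots\omega_{i_{2}i_{1}})\,\boldsymbol{\omega}_{i_{n}}^{T}\boldsymbol{\delta}_{i_{1}}$, which is precisely~\eqref{E-Omegaprod}. This exhausts the content of~(i).

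Parts~(ii) and~(iii) then require only an inspection of the product $(\omega_{i_{n}i_{n-1}}\cdots\omega_{i_{2}i_{1}})\,\boldsymbol{\omega}_{i_{n}}^{T}\boldsymbol{\delta}_{i_{1}}$. The scalar prefactor, being a product of entries of $\varOmega$ each equal to $0$ or $1$, equals $1$ exactly when all $\omega_{i_{k+1}i_{k}}=1$ for $k=1,\ldots,n-1$, i.e. when~\eqref{E-allowprod} holds, and is $0$ otherwise. The outer-product factor has $(p,q)$-entry $\omega_{pi_{n}}\delta_{qi_{1}}$, hence is supported only in the column $q=i_{1}$, in those rows $p=i_{*}$ with $\omega_{i_{*}i_{n}}=1$, and vanishes identically precisely when the $i_{n}$-th column of $\varOmega$ is zero. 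Combining the two observations yields~(ii): the product is nonzero iff~\eqref{E-allowprod} holds and some $\omega_{i_{*}i_{n}}=1$, which is $\varOmega$-admissibility, and in that case the nonzero entries sit exactly at the positions $(i_{*},i_{1})$ with $\omega_{i_{*}i_{n}}=1$. For~(iii) I would simply restrict to diagonal positions: a nonzero entry $(i_{*},i_{1})$ is diagonal iff $i_{*}=i_{1}$, so the product has a nonzero diagonal entry iff $\omega_{i_{1}i_{n}}=1$, i.e. iff the sequence is periodically extendable, and that entry then lies in the $i_{1}$-th row and column.

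There is no serious obstacle here; the lemma is essentially bookkeeping once the rank-one structure is exploited. The only point demanding care is the contraction identity together with the orientation of the two vectors, namely $\boldsymbol{\omega}_{i}$ encoding a \emph{column} of $\varOmega$ and $\boldsymbol{\delta}_{i}$ a \emph{coordinate row}, so that the inner pairings and the surviving outer factor carry the correct indices. Fixing these conventions is exactly what makes the verifications in~(ii) and~(iii) into immediate readings of~(i) rather than separate arguments.
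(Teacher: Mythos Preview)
Your proof is correct and follows essentially the same approach as the paper: both rely on the rank-one factorization $\varOmega_{i}=\boldsymbol{\omega}_{i}^{T}\boldsymbol{\delta}_{i}$ and the scalar contraction $\boldsymbol{\delta}_{i}\boldsymbol{\omega}_{j}^{T}=\omega_{ij}$, then read off~(ii) and~(iii) from the resulting expression. The only cosmetic difference is that the paper writes the full product out at once and collapses all inner pairs simultaneously, whereas you phrase the same telescoping as an induction.
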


\begin{proof}
By using representations \eqref{E-Omegai} for the matrices
$\varOmega_{i_{k}}$ we get
\[
\varOmega_{i_{n}}\varOmega_{i_{n-1}}\cdots \varOmega_{i_{1}}=
\boldsymbol{\omega}_{i_{n}}^{T}\boldsymbol{\delta}_{i_{n}} \boldsymbol{\omega}_{i_{n-1}}^{T}\boldsymbol{\delta}_{i_{n-1}}
\cdots \boldsymbol{\omega}_{i_{1}}^{T}\boldsymbol{\delta}_{i_{1}},
\]
where the vectors $\boldsymbol{\omega}_{i_{k}}$ and
$\boldsymbol{\delta}_{i_{k}}$ are defined by equalities \eqref{E-defei}. Here
$\boldsymbol{\delta}_{i_{k+1}}\boldsymbol{\omega}_{i_{k}}^{T}=\omega_{i_{k+1}i_{k}}$
for each $k=1,2,\ldots,n-1$ from which equality~\eqref{E-Omegaprod} follows.
Assertion (i) is proved.

By \eqref{E-Omegaprod} $\varOmega_{i_{n}}\varOmega_{i_{n-1}}\cdots
\varOmega_{i_{1}}\neq0$ if and only if condition \eqref{E-allowprod} holds
and besides
$\boldsymbol{\omega}_{i_{n}}^{T}\boldsymbol{\delta}_{i_{1}}\neq0$. But
$\boldsymbol{\omega}_{i_{n}}^{T}\boldsymbol{\delta}_{i_{1}}\neq0$ if and only
if there exists $i_{*}$ such that $\omega_{i_{*}i_{n}}=1$. From here
assertion (ii) follows.

At last, if condition \eqref{E-allowprod} holds then from assertion~(ii) it
follows that the matrix $\varOmega_{i_{n}}\varOmega_{i_{n-1}}\cdots
\varOmega_{i_{1}}$ has a diagonal nonzero element if and only if
$\omega_{i_{1}i_{n}}=1$. This last equality together with
condition~\eqref{E-allowprod} means that the sequence of indices
$(i_{1},\ldots,i_{n})$ is not only $\varOmega$-admissible but also
periodically extendable. Assertion (iii) is proved.
\end{proof}

Let us illustrate the statement of Lemma~\ref{L-prodT} by an example.
\begin{example}\label{Ex2}
Let $\varOmega$ be the same matrix as in Example~\ref{Ex1}. Then
$\varOmega_{4}\varOmega_{2}\varOmega_{1}=0$ because of $\omega_{21}=0$ and
therefore the sequence of indices $(1,2,4)$ is not $\varOmega$-admissible.

At the same time $\omega_{31}=1$, $\omega_{43}=1$, $\omega_{14}=1$. Then the
sequence of indices $(1,3,4)$ is $\varOmega$-admissible and periodically
extendable. Hence by Lemma~\ref{L-prodT}
$\varOmega_{4}\varOmega_{3}\varOmega_{1}\neq0$ and the diagonal element of
the matrix $\varOmega_{4}\varOmega_{3}\varOmega_{1}$ belonging to the $1$-st
row and column is other than zero. This conclusion is supported by the
following expression:
\[
\varOmega_{4}\varOmega_{3}\varOmega_{1}=\left(
\begin{array}{cccc}
1&0&0&0\\
1&0&0&0\\
1&0&0&0\\
0&0&0&0\\
\end{array}\right).
\]
\end{example}

Now, start proving Theorem~\ref{T-main}. Fix an $n$, and for a sequence of
indices $(i_{1},\ldots,i_{n})$ consider the matrix
$A^{(i_{n})}A^{(i_{n-1})}\cdots A^{(i_{1})}$. By using the identity
\[
(P\otimes Q)(R\otimes S)\equiv (PR)\otimes (QS)
\]
which, in particular, holds for any matrices $P,R$ of dimensions $N\times N$
and matrices $Q,S$ of dimensions $d\times d$ (see \cite[Lemma~ 4.2.10]{HJ:e})
we obtain
\begin{equation}\label{E-Aijprod}
A^{(i_{n})}A^{(i_{n-1})}\cdots
A^{(i_{1})}= \varOmega_{i_{n}}\varOmega_{i_{n-1}}\cdots
\varOmega_{i_{1}} \otimes A_{i_{n}}A_{i_{n-1}}\cdots
A_{i_{1}}.
\end{equation}

First prove the inequality
\begin{equation}\label{E-rhorho1}
\rho_{n}(\setA_{\varOmega})\le\rho_{n}({\setA},\varOmega).
\end{equation}
Let us note that the products of matrices $A^{(i_{k})}$ for which
$A^{(i_{n})}A^{(i_{n-1})}\cdots A^{(i_{1})}=0$ do not contribute to the
computation of the quantity $\rho_{n}(\setA_{\varOmega})$. Therefore it
suffices to consider the case when $A^{(i_{n})}A^{(i_{n-1})}\cdots
A^{(i_{1})}\neq0$. By \eqref{E-Aijprod} the latter may happen only in the
case when $\varOmega_{i_{n}}\varOmega_{i_{n-1}}\cdots
\varOmega_{i_{1}}\neq0$. But, by assertions (i) and (ii) of
Lemma~\ref{L-prodT}, $\varOmega_{i_{n}}\varOmega_{i_{n-1}}\cdots
\varOmega_{i_{1}}\neq0$ if and only if the sequence of indices
$(i_{1},\ldots,i_{n})$ is $\varOmega$-admissible, that is,
$(i_{1},\ldots,i_{n})\in W_{N,\varOmega}$. In this case equality
\eqref{E-Aijprod} by assertion (i) of Lemma~\ref{L-prodT} can be rewritten in
the form:
\begin{equation}\label{E-Aijprod1}
A^{(i_{n})}A^{(i_{n-1})}\cdots
A^{(i_{1})}= (\boldsymbol{\omega}_{i_{n}}^{T}\boldsymbol{\delta}_{i_{1}})\otimes A_{i_{n}}A_{i_{n-1}}\cdots
A_{i_{1}},
\end{equation}
where $(\boldsymbol{\omega}_{i_{n}}^{T}\boldsymbol{\delta}_{i_{1}})\neq0$.
So, in the right-hand part of \eqref{E-Aijprod1} stands a block matrix all
the block elements of which belong to a single column and coincide with
$A_{i_{n}}A_{i_{n-1}}\cdots A_{i_{1}}$. From here and from the definition of
the norm $\vvv\cdot\vvv$ it follows that
\[
\vvv A^{(i_{n})}A^{(i_{n-1})}\cdots
A^{(i_{1})}\vvv^{1/n}
= \|A_{i_{n}}A_{i_{n-1}}\cdots
A_{i_{1}}\|^{1/n}\le \rho_{n}({\setA},\varOmega).
\]
These last relations hold for any set of block matrices
$A^{(i_{k})}\in\setA_{\varOmega}$, $k=1,2,\ldots,n$, satisfying $\vvv
A^{(i_{n})}A^{(i_{n-1})}\cdots A^{(i_{1})}\vvv\neq0$, and therefore they
imply \eqref{E-rhorho1}.

Prove the inequality reciprocal to \eqref{E-rhorho1}. As has been already
proved the product of matrices $A^{(i_{k})}$ can be represented in the form
\eqref{E-Aijprod}. Then by assertions (i) and (ii) of Lemma~\ref{L-prodT} for
an $\varOmega$-admissible sequence $(i_{1},\ldots,i_{n})$ (i.e., such that
$(i_{1},\ldots,i_{n})\in W_{N,\varOmega}$) holds equality \eqref{E-Aijprod1},
where $(\boldsymbol{\omega}_{i_{n}}^{T}\boldsymbol{\delta}_{i_{1}})\neq0$.
Then
\[
\|A_{i_{n}}A_{i_{n-1}}\cdots
A_{i_{1}}\|^{1/n}= \vvv A^{(i_{n})}A^{(i_{n-1})}\cdots
A^{(i_{1})}\vvv^{1/n}\le \rho_{n}(\setA_{\varOmega}).
\]
But since these last relations are valid for any $\varOmega$-admissible
sequence $(i_{1},\ldots,i_{n})$ then
\begin{equation}\label{E-rhorho2}
\rho_{n}({\setA},\varOmega) \le \rho_{n}(\setA_{\varOmega}).
\end{equation}
From \eqref{E-rhorho1} and \eqref{E-rhorho2} we obtain the first claim
Theorem~\ref{T-main}:
$\rho_{n}(\setA_{\varOmega})=\rho_{n}({\setA},\varOmega)$.

To prove the second claim of Theorem~\ref{T-main} let us observe that by
assertion~(i) of Lemma~\ref{L-prodT} the block matrix in the right-hand part
of \eqref{E-Aijprod} may have nonzero elements only in one (block) column. In
this case its spectral radius may be nonzero only in the case when it has
nonzero diagonal element. By assertion~(iii) of Lemma~\ref{L-prodT} the
latter may happen if and only if the sequence $(i_{1},\ldots,i_{n})$ is
$\varOmega$-admissible and periodically extendable, that is,
$(i_{1},\ldots,i_{n})\in W^{(\text{per})}_{N,\varOmega}$. By
\eqref{E-Aijprod1} in this case it is valid the following equality
\begin{equation}\label{E-Aijsprad}
\rho(A^{(i_{n})}A^{(i_{n-1})}\cdots
A^{(i_{1})})
= \rho(A_{i_{n}}A_{i_{n-1}}\cdots
A_{i_{1}}).
\end{equation}

Let us note that equality \eqref{E-Aijsprad} is valid for any set of block
matrices $A^{(i_{k})}\in\setA_{\varOmega}$ satisfying $\vvv
A^{(i_{n})}A^{(i_{n-1})}\cdots A^{(i_{1})}\vvv\neq0$ as well as for any
sequence $(i_{1},\ldots,i_{n})\in W_{N,\varOmega}$ and the corresponding set
of matrices $A^{(i_{k})}\in\setA_{\varOmega}$. From here, like in the case of
the first claim of the theorem, we get
\[
\hat{\rho}_{n}(\setA_{\varOmega})=\hat{\rho}_{n}^{(\text{per})}({\setA},\varOmega).
\]

Theorem~\ref{T-main} is proved.\qed

\section{Concluding remarks}\label{S-comments}

In this section we shall discuss alternative definitions of the Markovian
joint spectral radius, and also the possibility to apply the techniques of
$\varOmega$-lifts of the set of matrices $\setA$ in the cases not mentioned
above.

\subsection{Alternative definitions of the Markovian joint spectral radius}
The definition \eqref{E-JSRadM} of the Markovian joint spectral radius is
essentially depends on the fact over which sets of matrices the norms of the
products of matrices are maximized during computing the quantities
$\rho_{n}({\setA},\varOmega)$, i.e., on how is the notion of
$\varOmega$-admissible sequences defined.

For example, we could treat a finite sequence $(i_{1},\ldots,i_{n})$
$\varOmega$-admissible if $\omega_{i_{j+1}i_{j}}=1$ was carried out for all
$1\le j \le n-1$ (not assuming existence of $i_{*}$ such that
$\omega_{i_{*}i_{n}}=1$). The set of all the finite sequences
$(i_{1},\ldots,i_{n})$, $\varOmega$-admissible in this sense, will be denoted
by $W^{(0)}_{N,\varOmega}$.

Also, we could treat a finite sequence $(i_{1},\ldots,i_{n})$
$\varOmega$-admissible if it was a starting interval of some infinite to the
right sequence $(i_{1},\ldots,i_{n},\ldots)$ for which the relations
$\omega_{i_{j+1}i_{j}}=1$ were valid for all $j\ge 1$. The set of all the
finite sequences $(i_{1},\ldots,i_{n})$, $\varOmega$-admissible in this
sense, will be denoted by $W^{(\infty)}_{N,\varOmega}$.

Clearly,
\[
W^{(\text{per})}_{N,\varOmega}\subseteq  W^{(\infty)}_{N,\varOmega}\subseteq
W_{N,\varOmega}\subseteq W^{(0)}_{N,\varOmega}.
\]
Then
\begin{equation}\label{E-rhoineq}
\rho^{(\text{per})}_{n}({\setA},\varOmega)\le  \rho^{(\infty)}_{n}({\setA},\varOmega)\le
\rho_{n}({\setA},\varOmega)\le \rho^{(0)}_{n}({\setA},\varOmega),\qquad n\ge 1,
\end{equation}
where
\begin{align*}
\rho^{(\text{per})}_{n}({\setA},\varOmega)&:=\sup\left\{\|A_{i_{n}}\cdots A_{i_{1}}\|^{1/n}:~
(i_{1},\ldots,i_{n})\in W^{(\text{per})}_{N,\varOmega}\right\},\\
\rho^{(\infty)}_{n}({\setA},\varOmega)&:=\sup\left\{\|A_{i_{n}}\cdots A_{i_{1}}\|^{1/n}:~
(i_{1},\ldots,i_{n})\in W^{(\infty)}_{N,\varOmega}\right\},\\
\rho^{(0)}_{n}({\setA},\varOmega)&:=\sup\left\{\|A_{i_{n}}\cdots A_{i_{1}}\|^{1/n}:~
(i_{1},\ldots,i_{n})\in W^{(0)}_{N,\varOmega}\right\},\\
\intertext{and the quantity $\rho_{n}({\setA},\varOmega)$ has been defined already as}
\rho_{n}({\setA},\varOmega)&:=\sup\left\{\|A_{i_{n}}\cdots A_{i_{1}}\|^{1/n}:~
(i_{1},\ldots,i_{n})\in W_{N,\varOmega}\right\}.
\end{align*}

Observe that $\|A_{i_{n}}A_{i_{n-1}}\cdots A_{i_{1}}\|\le \alpha
\|A_{i_{n-1}}\cdots A_{i_{1}}\|$ with $\alpha=\max_{1\le i\le N} \|A_{i}\|$,
and besides $(i_{1},\ldots,i_{n-1})\in W_{N,\varOmega}$ for any
$(i_{1},\ldots,i_{n-1},i_{n})\in W^{(0)}_{N,\varOmega}$. Then
\begin{multline*}
\sup\left\{\|A_{i_{n}}\cdots A_{i_{1}}\|:~
(i_{1},\ldots,i_{n})\in W^{(0)}_{N,\varOmega}\right\}\\
\le\alpha \sup\left\{\|A_{i_{n-1}}\cdots A_{i_{1}}\|:~
(i_{1},\ldots,i_{n-1})\in W_{N,\varOmega}\right\},\qquad n>1,
\end{multline*}
and therefore
\begin{equation}\label{E-rhonnm1}
\rho^{(0)}_{n}({\setA},\varOmega)\le
\alpha^{1/n} \left(\rho_{n-1}({\setA},\varOmega)\right)^{(n-1)/n},\qquad n>1.
\end{equation}

From \eqref{E-rhoineq} and \eqref{E-rhonnm1}, and from the evident inequality
$\hat{\rho}^{(\text{per})}_{n}({\setA},\varOmega)\le
\rho^{(\text{per})}_{n}({\setA},\varOmega)$ it follows that
\[
\hat{\rho}^{(\text{per})}_{n}({\setA},\varOmega)
\le\rho^{(\text{per})}_{n}({\setA},\varOmega)
\le  \rho^{(\infty)}_{n}({\setA},\varOmega)
\le \rho^{(0)}_{n}({\setA},\varOmega)
\le
\alpha^{1/n} \left(\rho_{n-1}({\setA},\varOmega)\right)^{(n-1)/n}
\]
for any $n> 1$. Then, by passing to the upper limits in these last
inequalities, by Dai's Theorem we obtain the following generalization of the
definition of the Markovian joint spectral radius:
\begin{multline}\label{E-BWeqGen}
\rho({\setA},\varOmega):=
\limsup_{n\to\infty}\rho_{n}({\setA},\varOmega)=\\=
\limsup_{n\to\infty}\rho^{(\text{per})}_{n}({\setA},\varOmega)=
\limsup_{n\to\infty}\rho^{(\infty)}_{n}({\setA},\varOmega)=
\limsup_{n\to\infty}\rho^{(0)}_{n}({\setA},\varOmega).
\end{multline}
At last, by observing that quantities
$\bigl(\rho_{n}({\setA},\varOmega)\bigr)^{n}$,
$\bigl(\rho^{(\text{per})}_{n}({\setA},\varOmega)\bigr)^{n}$,
$\bigl(\rho^{(\infty)}_{n}({\setA},\varOmega)\bigr)^{n}$ and
$\bigl(\rho^{(0)}_{n}({\setA},\varOmega)\bigr)^{n}$ are sub-multiplicative in
$n$, we by the Fekete Lemma~\cite{Fekete:MZ23}, like in the case of formula
\eqref{E-JSRad}, can replace in \eqref{E-BWeqGen} all upper limits by limits:
\begin{multline*}
\rho({\setA},\varOmega):=
\lim_{n\to\infty}\rho_{n}({\setA},\varOmega)=\\=
\lim_{n\to\infty}\rho^{(\text{per})}_{n}({\setA},\varOmega)=
\lim_{n\to\infty}\rho^{(\infty)}_{n}({\setA},\varOmega)=
\lim_{n\to\infty}\rho^{(0)}_{n}({\setA},\varOmega).
\end{multline*}

\subsection{Products of matrices defined by subshifts of finite type}
In the symbolic dynamics the topological Markov chains are a particular case
of the so-called \emph{$k$-step topological Markov chains}
\cite[\S~1.9]{KatokHas:e} or the \emph{subshifts of finite type}
\cite{Kitchens98}. Here, by a $k$-step topological Markov chain or a subshift
of finite type is meant the restriction of the shift operator in the space
$\{1,2,\ldots,N\}^{\mathbb{Z}}$ (or $\{1,2,\ldots,N\}^{\mathbb{N}}$) to the
set of all the sequences $(i_{1},\ldots,i_{n},\ldots)$ in which admissibility
of appearance of the symbol $i_{n}$ is defined not by the immediately
preceding symbol $i_{n-1}$, as in the usual topological Markov shifts, but by
the sequence of $k$ preceding symbols $(i_{n-k},\ldots,i_{n-1})$.

The $k$-step topological Markov chains may be treated as usual ($1$-step)
Markov chains over the alphabet $\{1,2,\ldots,N\}^{k}$
\cite[\S~1.9]{KatokHas:e}. Therefore the described above approach of
$\varOmega$-lifts of sets of matrices is applicable for consideration of the
joint/generalized spectral radius of sets of matrices in which admissibility
of matrix products is defined in accordance with some $k$-step topological
Markov chains or subshifts of finite type.

\subsection{Another characteristics of matrix products}
It seems the approach of $\varOmega$-lifts of sets of matrices can be applied
to analyze some other Markovian analogs of formulae for computing the joint
spectral radius, e.g., for computing the Markovian joint spectral radius via
the trace of matrix products~\cite{ChenZhou:LAA00}.

At the same time the specific feature of the proposed approach is that all
the matrices from $\setA_{\varOmega}$ are degenerate, and among the products
of matrices from $\setA_{\varOmega}$ may occur zero matrices. This makes
improbable applying the proposed approach to study, for example, the
Markovian analogs of the lower joint spectral
radius~\cite{BM:JAMS02,PJB:SIAMJMAA10,Jungers:LAA12,GugProt:FCM13,BochiMor:ArXiv13}
introduced in~\cite{Gurv:LAA95}.

\subsection{Infinite sets of matrices}
In the paper we confined ourselves to consideration of only finite sets of
matrices $\setA$ although neither in the definitions of the joint or
generalized spectral radii (both usual and Markovian) nor in the classical
Berger-Wang Theorem~\cite{BerWang:LAA92} the finiteness of the set of
matrices $\setA$ is not required. In connection with this there arises a
question about possibility of applying the techniques of $\varOmega$-lifts to
infinite sets of matrices $\setA$.

The main difficulty here is that for infinite sets of matrices $\setA$ the
related matrix of admissible transitions $\varOmega$ is also infinite. Then
the matrices from $\setA_{\varOmega}$ are also become infinite, that is, more
adequately they should be treated as linear operators in some
infinite-dimensional spaces. But, when passing to linear operators in
infinite-dimensional spaces, the relationship between the joint and
generalized spectral radii becomes more
complicated~\cite{ShulTur:JFA00,ShulTur:SM02,Morris:JFA12}.

In connection with this the applicability of the techniques of
$\varOmega$-lifts for infinite sets of matrices $\setA$, as well as the
validity of Dai's Theorem in this case, remains to be an open question.

\subsection{Barabanov's norms}
A prominent tool in analysis of convergence of matrix products is the
so-called Barabanov's norm~\cite{Bar:AIT88-2:e} which, for a finite set of
matrices $\setA$ is defined to be a vector norm $\|\cdot\|$ such that for
some $\rho$ (necessary coinciding with $\rho(\setA)$) holds the identity
$\rho\|x\|\equiv \max_{i}\|A_{i}x\|$.

Known proofs of existence of the Barabanov norm, see, e.g.,
\cite{Bar:AIT88-2:e,Jungers:09,Wirth:CDC05}, essentially use the fact of
arbitrariness of appearance of different matrices in the related matrix
products. To the best of the author's knowledge, for the case of Markovian
joint spectral radius there are no analogs of the Barabanov norm. At the same
time, by using the procedure of $\varOmega$-lifts, for matrices from
$\setA_{\varOmega}$ all the products are allowed! Hence for such matrices it
is possible to define the norm of Barabanov.

To which extent this consideration might be useful for investigation of the
Markovian joint spectral radius will show the future.

\section*{Acknowledgments}
The author is indebted to Prof. Xiongping Dai for inspirational discussions
and numerous fruitful comments.

\bibliographystyle{elsarticle-num}
\bibliography{MarkovJSR}

\end{document}